\documentclass{amsart}

\usepackage{dha}

\author{Fernando Muro}
\address{Universidad de Sevilla,
Facultad de Matemáticas,
Departamento de Álgebra,
Avda. Reina Mercedes s/n,
41012 Sevilla, Spain}
\email{fmuro@us.es}
\urladdr{http://personal.us.es/fmuro}

\title{Massey products for algebras over operads}

\begin{document}

\begin{abstract}
  We define a generalization of Massey products for algebras over a Koszul operad in characteristic zero, extending Massey's and Allday's and Retah's in the associative and Lie cases, respectively. We establish connections with minimal models and with Dimitrova's universal operadic cohomology class. We compute a Gerstenhaber algebra example and a hypercommutative algebra example
  related to the Chevalley--Eilenberg complex of the Heisenberg Lie algebra.
\end{abstract}

\thanks{The author was partially supported by the grants PID2020-117971GB-C21
  funded by MCIN/AEI/10.13039/501100011033, US-1263032 (US/JUNTA/FEDER, UE), and
  P20\_01109 (JUNTA/FEDER, UE). He thanks Luis Narváez for conversations leading to the hypercommutative example, José Manuel Moreno-Fernández for suggesting some improvements and providing references, and Marco Farinati for interesting conversations on Lie bialgebras.}

\subjclass[2020]{18M70,55S20,13D03,16E40,17B56}

\keywords{Massey product, operad, algebra}

\maketitle

\tableofcontents

\section{Introduction}

A Massey product
$\langle a,b,c\rangle$
is a well-known degree $1$ secondary operation in the homology $H_*(A)$ of a DG-associative algebra $A$ which is defined whenever $ab=0=bc$. It is an element in the following quotient
\[\langle a,b,c\rangle\in \frac{H_{\abs{a}+\abs{b}+\abs{c}+1}(A)}{H_{\abs{a}+\abs{b}+1}(A)c+aH_{\abs{b}+\abs{c}+1}(A)}.\]
It is often regarded as a subset
\[\langle a,b,c\rangle\subset H_{\abs{a}+\abs{b}+\abs{c}+1}(A)\]
and the denominator of the previous quotient is considered its indeterminacy.

This operation was introduced by Massey in \cite{massey_1969_higher_order_linking}. It is non-trivial in the cohomology of the complement of the Borromean link in $S^3$, which has trivial cup-product. It is closely related to the associativity relation
\[(ab)c-a(bc)=0\]
since it is defined as follows. If $\alpha,\beta,\gamma$ are cycles representing $a,b,c$ and $\zeta,\xi$ are chains satisfying $d(\zeta)=\alpha\beta, d(\xi)=\beta\gamma$ then
\[\zeta\gamma-(-1)^{\abs{\alpha}}\alpha\xi\]
is a cycle because its differential yields the associativity relation. The homology class of this cycle represents $\langle a,b,c\rangle$.

Allday \cite{allday_1973_rational_whitehead_products,allday_1977_rational_whitehead_products}, and independently Retah \cite{retah_1977_massey_operations_lie,retah_1978_massey_operations_lie}, introduced a similar operation in the homology $H_*(L)$ of a DG-Lie algebra $L$, called Lie-Massey products,
\[\langle a,b,c\rangle\in \frac{H_{\abs{a}+\abs{b}+\abs{c}+1}(L)}{[H_{\abs{a}+\abs{b}+1}(L),c]+[a,H_{\abs{b}+\abs{c}+1}(L)]}.\]
They used it in applications to rational homotopy theory.
This operation is related to the graded Jacobi identity
\[[a,[b,c]]-[[a,b],c]-(-1)^{\abs{a}\abs{b}}[b,[a,c]]=0\]
in the same way as the previous one is connected to the associativity relation.

Given a graded quadratic Koszul operad $\O{O}$ over a field $\Bbbk$ of characteristic zero, we here define a secondary Massey-product-like operation in the homology $H_*(A)$ of a DG-$\O{O}$-algebra $A$ for each relation in the presentation of $\O{O}$. This extends the original Massey and the Lie-Massey products when $\O{O}$ is the associative and the Lie operad, respectively. We show that they are connected to the higher operations in a minimal transferred $\O{O}_\infty$-algebra structure on $H_*(A)$. In particular, they yield obstructions to formality. It is worth mentioning at this point that this connection was already known for the associative operad. Moreover, it was believed to extend to higher Massey products in the cohomology of a differential graded associative algebra. This extension is actually stated as \cite[Theorem 3.1]{lu_palmieri_wu_zhang_2009_ainfinity_structure_extalgebras}. Inspired by that, higher operations in $\O{O}_\infty$-algebras were called Massey products in \cite{galvez-carrillo_tonks_vallette_2012_homotopy_batalinvilkovisky_algebras,loday_vallette_2012_algebraic_operads}. However, a counterexample to the aforementioned belief (and theorem) was recently discovered by \cite{buijs_moreno-fernandez_murillo_2020_infty_structures_massey}. This justifies the study of Massey products on their own.

We also establish a connection between our Massey-product-like operations and Dimitrova's \cite{dimitrova_2012_obstruction_theory_operadic} universal class of an operadic algebra $A$, which is another obstruction to formality.

We illustrate this new definition with Gerstenhaber and hypercommutative algebras. Gerstenhaber algebras are commutative algebras equipped with a shifted Lie bracket satisfying the following compatibility relation, called \emph{Gerstenhaber relation},
\[[a,b c]=[a,b] c+(-1)^{(\abs{a}-1)\abs{b}}b[a,c].\]
Hypercommutative algebras are also commutative algebras, now equipped with a whole sequence of operations of arities $r\geq 3$,
\[(a_1,\dots,a_r).\]
They satisfy several relations, including
\begin{equation*}
  (a b, c, x)+(-1)^{\abs{c}\abs{x}}(a,b,x) c = a (b,c,x)+(a,b c,x).
\end{equation*}
These kinds of algebras arise in different but related contexts, like differential geometry \cite{koszul_1985_crochet_schoutennijenhuis_cohomologie,dotsenko_shadrin_vallette_2015_rham_cohomology_homotopy} and Lie algebra cohomology \cite{kosmann-schwarzbach_1995_exact_gerstenhaber_algebras, Campos2016}. In the last section, we compute non-vanishing Massey products associated to these relations in the latter context and we relate them to the former.

In general, DG-algebras over a Koszul operad $\O{O}$ arise naturally as the operadic cochain complex of an algebra over the Koszul dual operad $\O{O}^{!}$. This construction yields unlimited examples to test the existence of non-trivial Massey products.
We aim at keeping this paper short, but several related topics could be addressed next. In the associative and Lie settings, higher order Massey products are defined whenever shorter ones vanish, and they play a role in several applications, see e.g.~\cite{allday_1973_rational_whitehead_products,allday_1977_rational_whitehead_products,retah_1977_massey_operations_lie,retah_1978_massey_operations_lie}. As in the associative case, we expect them to be not so related to higher operations in minimal models, compare \cite{buijs_moreno-fernandez_murillo_2020_infty_structures_massey} mentioned above. Hence their study would be of independent interest. It could also be worth to extend Massey products to algebras over generalizations of operads, such as colored operads, cyclic operads, etc. This should fit the Massey products in the homology of an operad used in \cite{livernet_2015_nonformality_swisscheese_operad} to prove that the Swiss cheese operad is not formal.

We here nominally work with Koszul operads in characteristic zero \cite{ginzburg_kapranov_1994_koszul_duality_operads,loday_vallette_2012_algebraic_operads}. We borrow terminology and notation from \cite{loday_vallette_2012_algebraic_operads}. In Remark \ref{hypotheses} we indicate how most things make sense in positive characteristic, or even over general ground rings and non-Koszul operads. This is relevant for applications of Massey products to minimal models for operadic algebras over commutative rings, extending \cite{sagave_2010_dgalgebras_derived_algebras}, see \cite{Maes2021,Muro2021}.

The degree of a homogeneous element $x\in X$ in a graded module $X$ is denoted by $\abs{x}$.


\section{Massey products}\label{firstsection}

Let $\O{O}=\P{E}{R}=\F(E)/(R)$ be a quadratic Koszul graded operad generated by a graded reduced $\mathbb{S}$-module $E$ with sub-$\mathbb{S}$-module of relations $R\subset\F(E)^{(2)}$.

\begin{definition}\label{massey_product}
    Let
    \begin{equation}\label{relation}
        \Gamma=\sum(\mu^{(1)}\circ_l\mu^{(2)})\cdot\sigma\in R(r)
    \end{equation}
    be a \emph{relation} of arity $r$. In this Sweedler-like notation, $\mu^{(i)}\in E(r_i)$, $r_1+r_2-1=r$,  $1\leq l\leq r_1$, $\sigma\in\mathbb{S}_r$, and $\circ_l$ denotes the infinitesimal composition.

    Let $A$ be a DG-$\O{O}$-algebra and let $x_i\in H_{*}(A)$, $1\leq i\leq r$, be elements such that
    \begin{equation}\label{vanishing}
        \mu^{(2)}(x_{\sigma^{-1}(l)},\dots, x_{\sigma^{-1}(l+r_2-1)})=0\in H_{\abs{\mu^{(2)}}+\sum_{i=1}^{r_2}\abs{x_{\sigma^{-1}(l+i-1)}}}(A)
    \end{equation}
    for each summand in the relation. For each $1\leq i\leq r$, let $y_i\in A_{\abs{x_i}}$ be a representative of $x_i$ and, for each summand in the relation, let
    \[\rho^{(2)}\in A_{\abs{\mu^{(2)}}+\sum_{i=1}^{r_2}\abs{x_{\sigma^{-1}(l+i-1)}}+1}\]
    be an element such that
    \begin{equation}\label{massey_choice}
        d(\rho^{(2)})=\mu^{(2)}(y_{\sigma^{-1}(l)},\dots, y_{\sigma^{-1}(l+r_2-1)}).
    \end{equation}
    Such an element must exist by \eqref{vanishing}.
    We define the \emph{Massey product}
    \[\langle x_1,\dots,x_r\rangle_\Gamma\]
    as the element of
    \begin{equation}\label{quotient}
        \frac{H_{\abs{\Gamma}+\sum_{i=1}^r\abs{x_i}+1}(A)}{\sum\mu^{(1)}\left(x_{\sigma^{-1}(1)},\dots,x_{\sigma^{-1}(l-1)},H_{\abs{\mu^{(2)}}+\sum_{i=1}^{r_2}\abs{x_{\sigma^{-1}(l+i-1)}}+1}(A),x_{\sigma^{-1}(l+r_2)},\dots,x_{\sigma^{-1}(r_1)}\right)}
    \end{equation}
    represented by the homology class of
    \begin{equation}\label{representative}
        \sum(-1)^{\gamma}\mu^{(1)}(y_{\sigma^{-1}(1)},\dots,y_{\sigma^{-1}(l-1)},\rho^{(2)},y_{\sigma^{-1}(l+r_2)},\dots,y_{\sigma^{-1}(r)}),
    \end{equation}
    where
    \[\gamma  =\alpha+\abs{\mu^{(1)}}+(\abs{\mu^{(2)}}-1)\sum_{m=1}^{l-1}\abs{x_{\sigma^{-1}(m)}},\qquad
        \alpha  =\sum_{\substack{s<t\\\sigma(s)>\sigma(t)}}\abs{x_s}\abs{x_t}.\]
\end{definition}

\begin{remark}\label{basic_examples}
    The presentation of the associative operad $\O{A}$ is given by the $\mathbb{S}$-module $E$ with $E(2)_0=\kk[\mathbb{S}_2]$ generated as an $\mathbb{S}_2$-module by $\mu$, which represents the associative product, and $E(r)=0$ elsewhere. Moreover, $R$ is generated as an $\mathbb{S}$-module by
    \[\Gamma=\mu\circ_1\mu-\mu\circ_2\mu\in R(3),\]
    the associativity relation. The Massey products defined by $\Gamma$ are the same as the classical Massey products recalled in the introduction.

    Similarly, if $\O{L}$ is the Lie operad then $E(2)_0=k$ generated by $l$ with the sign action of $\mathbb{S}_2$, $E(r)=0$ otherwise, and $R$ is generated by
    \[\Gamma=(l\circ_1l)\cdot[()+(1\,2\,3)+(3\,2\,1)]\in R(3),\]
    the Jacobi identity. Massey products defined by $\Gamma$ coincide up to sing with the Lie-Massey products introduced in \cite{allday_1973_rational_whitehead_products,retah_1977_massey_operations_lie}.
\end{remark}

\begin{proposition}
    In Definition \ref{massey_product}, the chain \eqref{representative} is a cycle. Moreover, its homology class represents a well-defined element of the quotient \eqref{quotient}. Furthermore, different choices yield cycles \eqref{representative} whose cohomology classes run over all possible representatives of $\langle x_1,\dots,x_r\rangle_\Gamma$ in the quotient \eqref{quotient}.
\end{proposition}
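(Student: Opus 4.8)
The plan is to verify the three assertions in turn, every computation being governed by the Koszul sign rule together with the single fact that $\Gamma$, being a relation, acts as zero on every DG-$\O{O}$-algebra. For the cycle claim I would apply the differential to \eqref{representative}: since every $y_i$ is a cycle, the graded Leibniz rule kills all terms except those in which $d$ hits some $\rho^{(2)}$, and by \eqref{massey_choice} these assemble into
\[\sum(-1)^{\gamma'}\mu^{(1)}\bigl(y_{\sigma^{-1}(1)},\dots,y_{\sigma^{-1}(l-1)},\mu^{(2)}(y_{\sigma^{-1}(l)},\dots,y_{\sigma^{-1}(l+r_2-1)}),y_{\sigma^{-1}(l+r_2)},\dots,y_{\sigma^{-1}(r)}\bigr),\]
where $\gamma'$ differs from $\gamma$ by the sign of commuting $d$ past $\mu^{(1)}$ and the first $l-1$ entries. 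The summand $(\abs{\mu^{(2)}}-1)\sum_{m<l}\abs{x_{\sigma^{-1}(m)}}$ built into $\gamma$ is precisely the correction making $(-1)^{\gamma'}$ the sign with which $(\mu^{(1)}\circ_l\mu^{(2)})\cdot\sigma$ acts on $(y_1,\dots,y_r)$, while $(-1)^{\alpha}$ absorbs the $\sigma$-action; so the displayed expression equals $\Gamma(y_1,\dots,y_r)$, which is $0$ because $\Gamma\in R$ and $A$ is an $\O{O}$-algebra.

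For well-definedness in \eqref{quotient} I would separate the data into the representatives $y_i$ and the chains $\rho^{(2)}$ and vary each. Keeping the $y_i$ fixed and replacing the $\rho^{(2)}$ of one summand by another solution of \eqref{massey_choice}, the two differ by a cycle $z$ of degree $\abs{\mu^{(2)}}+\sum_{i=1}^{r_2}\abs{x_{\sigma^{-1}(l+i-1)}}+1$, so \eqref{representative} changes by $(-1)^{\gamma}\mu^{(1)}(y_{\sigma^{-1}(1)},\dots,z,\dots,y_{\sigma^{-1}(r)})$, whose class lies in the matching summand of the denominator of \eqref{quotient}. Replacing one representative $y_i$ by $y_i+d(w_i)$: in a summand whose $i$-th slot feeds $\mu^{(2)}$, Leibniz gives $\mu^{(2)}(\dots,y_i+d(w_i),\dots)-\mu^{(2)}(\dots,y_i,\dots)=d(u)$ with $u=\pm\mu^{(2)}(\dots,w_i,\dots)$, so $\rho^{(2)}+u$ is a legal new choice and that summand of \eqref{representative} changes by $\pm(-1)^{\gamma}\mu^{(1)}(y_{\dots},\mu^{(2)}(\dots,w_i,\dots),y_{\dots})$; in a summand whose $i$-th slot is a direct argument of $\mu^{(1)}$, the change is $\pm d\bigl(\mu^{(1)}(\dots,w_i,\dots,\rho^{(2)},\dots)\bigr)$ together with $\mp(-1)^{\gamma}\mu^{(1)}(\dots,w_i,\dots,\mu^{(2)}(y_{\dots}),\dots)$. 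Summed over all summands, the non-boundary terms should reassemble into $\Gamma(y_1,\dots,w_i,\dots,y_r)=0$, so changing the $y_i$ moves \eqref{representative} only by a boundary. Since the denominator of \eqref{quotient} depends only on the classes $x_i$, it is itself well defined, and combining the two variations shows that $\langle x_1,\dots,x_r\rangle_\Gamma$ is a well-defined element of that quotient.

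For the last assertion I would note that, by multilinearity of each $\mu^{(1)}$, a general element of the denominator of \eqref{quotient} is $\sum(-1)^{\gamma}\mu^{(1)}(x_{\sigma^{-1}(1)},\dots,h,\dots,x_{\sigma^{-1}(r_1)})$ with one class $h$ per summand; choosing a representing cycle $z$ for each $h$ and replacing the corresponding $\rho^{(2)}$ by $\rho^{(2)}+z$ — still a solution of \eqref{massey_choice}, since $z$ is a cycle — changes \eqref{representative} by exactly that element, so every representative of the coset is realized. The cycle computation is routine; the genuinely delicate step will be the replacement of $y_i$ by $y_i+d(w_i)$, where one must exhibit the correcting chain $u$ on each affected summand and then check that the surviving non-boundary contributions recombine, with a uniform sign, into $\Gamma$ evaluated with $w_i$ in the $i$-th slot — this is exactly where the precise shape of $\gamma$ gets used.
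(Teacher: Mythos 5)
Your proposal is correct and follows essentially the same route as the paper's proof: the differential of \eqref{representative} assembles into $\Gamma(y_1,\dots,y_r)=0$; varying $\rho^{(2)}$ by cycles sweeps out exactly the denominator of \eqref{quotient}; and the change $y_i\mapsto y_i+d(w_i)$ is absorbed by correcting the affected $\rho^{(2)}$ and exhibiting the residual difference as the boundary of the chain with $w_i$ in the $i$-th slot, the leftover terms cancelling because they reassemble into $\Gamma(y_1,\dots,w_i,\dots,y_r)=0$. The paper likewise leaves the final sign verification as a ``straightforward computation,'' so your level of detail matches its proof.
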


\begin{proof}
    If we apply the differential $d$ of $A$ to \eqref{representative} we obtain the equation corresponding to the relation \eqref{relation} applied to $y_1,\dots,y_r\in A$, hence it vanishes.

    Once we choose $\rho^{(2)}$ as in \eqref{massey_choice}, the other possible choices are $\rho^{(2)}+\zeta^{(2)}$ where $\zeta^{(2)}\in A$ is a cycle of degree $\abs{\mu^{(2)}}+\sum_{i=1}^{r_2}\abs{x_{\sigma^{-1}(l+i-1)}}+1$. The difference between \eqref{representative} and the representative obtained from these other choices is
    \[\sum(-1)^{\gamma}\mu^{(1)}(y_{\sigma^{-1}(1)},\dots,y_{\sigma^{-1}(l-1)},\zeta^{(2)},y_{\sigma^{-1}(l+r_2)},\dots,y_{\sigma^{-1}(r)}).\]
    This is a cycle representing an element of the denominator of \eqref{quotient}. Conversely, for any element in the denominator of \eqref{quotient} we can take cycles $\zeta^{(2)}$ representing the elements in
    \[H_{\abs{\mu^{(2)}}+\sum_{i=1}^{r_2}\abs{x_{\sigma^{-1}(l+i-1)}}+1}(A).\]
    We can use these cycles to modify the initially chosen $\rho^{(2)}$ in order to obtain as \eqref{representative} all possible representatives of the Massey product in \eqref{quotient}.

    We may also choose another representative of some $x_i$, which must be of the form $y_i+d(w_i)$ for some $w_i\in A_{\abs{x_i}+1}$. This forces us to modify those $\rho^{(2)}$ where $i=\sigma^{-1}(l+j)$ for some $0\leq j<r_2$, but the situation is not too dramatic because we can take
    \[\rho^{(2)}+(-1)^{\gamma'}\mu^{(2)}(y_{\sigma^{-1}(l)},\dots, w_i,\dots,y_{\sigma^{-1}(l+r_2-1)}).\]
    Here $w_i$ occupies the slot $j+1$ and $\gamma'$ is the sum of the degrees of the symbols preceding $w_i$. The difference between the cycle produced by these new choices and \eqref{representative} is
    \begin{equation}\label{horror}
        \begin{split}
            \sum(-1)^{\gamma+\gamma'}\mu^{(1)}(y_{\sigma^{-1}(1)},\dots,\mu^{(2)}(y_{\sigma^{-1}(l)},\dots, w_i,\dots,y_{\sigma^{-1}(l+r_2-1)}),\dots,y_{\sigma^{-1}(r)})\\
            +\sum(-1)^{\gamma}\mu^{(1)}(y_{\sigma^{-1}(1)},\dots,d(w_i),\dots,\rho^{(2)},\dots,y_{\sigma^{-1}(r)}).
        \end{split}
    \end{equation}
    Here, the first summation is indexed by those summands of \eqref{relation} such that $i=\sigma^{-1}(l+j)$ for some $0\leq j<r_2$, and the second one is indexed by the rest of summands of \eqref{relation}. Moreover, in the latter, $d(w_i)$ occupies the place of $y_i$ in \eqref{representative}, which could also be after $\rho^{(2)}$, but we do not want to further overload the notation. We consider the chain
    \begin{equation}\label{horror2}
        \sum(-1)^{\gamma+\gamma''}\mu^{(1)}(y_{\sigma^{-1}(1)},\dots,w_i,\dots,\rho^{(2)},\dots,y_{\sigma^{-1}(r)})
    \end{equation}
    obtained, up to signs, by replacing $d(w_i)$ with $w_i$ in the second summation of \eqref{horror}. The element $\gamma''$ is the sum of the degrees of the symbols preceding $w_i$ here. A straightforward computation shows that the difference between \eqref{horror} and the differential of \eqref{horror2} is, up to sign,
    the relation \eqref{relation} applied to the elements $y_1,\dots, y_{i-1},w_i,y_{i+1},\dots,y_r\in A$, hence it is zero, i.e.~\eqref{horror} is the differential of \eqref{horror2}. This proves that the new choices yield a cycle in the same homology class as \eqref{representative} in $H_*(A)$.
\end{proof}

We can therefore regard the Massey product as a coset
\[\langle x_1,\dots,x_r\rangle_\Gamma\subset H_{\abs{\Gamma}+\sum_{i=1}^r\abs{x_i}+1}(A).\]
The denominator of \eqref{quotient} is often referred to as the \emph{indeterminacy} of this Massey product.

It is straightforward to check that Massey products are preserved by DG-$\O{O}$-algebra morphisms. Moreover, a weighted morphism of quadratic Koszul operads $f\colon\O{O}\to\O{P}$ induces a morphism between their $\mathbb{S}$-modules of relations $\bar{f}\colon R\to R'$. Given $\Gamma\in R$ and a DG-$\O{P}$-algebra $A$, a Massey product with respect to $\bar{f}(\Gamma)$ coincides with the Massey product with respect to $\Gamma$ upon restriction of scalars along $f$.

\section{Massey products and minimal models}\label{mm}

Any DG-$\O{O}$-algebra $A$ has a \emph{minimal model}, consisting of an $\Oinf{\O{O}}$-algebra structure on $H_*(A)$ extending the induced $\O{O}$-algebra structure and an $\infty$-quasi-iso\-morph\-ism $f\colon H_*(A)\leadsto A$ (this notion is recalled below) whose underlying chain map sends a homology class to a representing cycle.

We now recall how an $\Oinf{\O{O}}$-algebra structure can be described in terms of the Koszul dual coaugmented cooperad $\K{\O{O}}$. The \emph{composite} of two $\mathbb{S}$-modules is
\begin{equation}\label{composite}
    M\circ N=\bigoplus_{r\geq 0}M(r)\otimes_{\mathbb{S}_r}N^{\otimes^r}.
\end{equation}
Here, a tensor $\mu\otimes (\nu_1\otimes\cdots\otimes \nu_r)\cdot\sigma$ is simply denoted by $(\mu;\nu_1,\dots, \nu_n)\cdot\sigma$. This defines a non-symmetric monoidal structure on $\mathbb{S}$-modules. The monoidal unit $\unit$, which is also the initial operad, is $\unit(1)=\kk$ concentrated in degree $0$ and $\unit(r)=0$ for $r\neq 1$. The \emph{infinitesimal composite} $M\circ_{(1)}N$ of two $\mathbb{S}$-modules is the sub-$\mathbb{S}$-module of $M\circ(\unit\oplus N)$ given by
\[M\circ_{(1)}N=\bigoplus_{r\geq 1}M(r)\otimes_{\mathbb{S}_r}\left(\bigoplus_{i=1}^r\unit^{\otimes^{i-1}}\otimes N\otimes \unit^{\otimes^{r-i}}\right).\]
A tensor $(x;1,\stackrel{i-1}{\dots},1,y,1,\stackrel{r-i}{\dots},1)\cdot\sigma$ here is denoted by $(x\circ_iy)\cdot\sigma$.
We denote the \emph{infinitesimal decomposition} \cite[\S6.1.4]{loday_vallette_2012_algebraic_operads} of $\K{\O{O}}$ by
\begin{equation}\label{sweedler_infinitesimal}
    \Delta_{(1)}\colon\K{\O{O}}\To \K{\O{O}}\circ_{(1)}\K{\O{O}},\qquad
    \Delta_{(1)}(\mu)=\sum_{(\mu)}(\mu^{(1)}\circ_{l}\mu^{(2)})\cdot\sigma.
\end{equation}
Here we use a Sweedler notation, like in \cite[\S10.1.2]{loday_vallette_2012_algebraic_operads}. 

An $\Oinf{\O{O}}$-algebra structure on a complex $A$ is given by morphisms
\begin{equation}\label{structure_maps}
    \begin{split}
        \K{\O{O}}(r)_{n}\otimes A_{p_1}\otimes\cdots\otimes A_{p_r} & \To A_{n+\sum_{i=1}^rp_i-1},  \\
        \mu\otimes x_1\otimes\cdots\otimes x_r                                  & \;\mapsto\;\mu(x_1,\dots,x_r),
    \end{split}
\end{equation}
satisfying
\[(\mu\cdot\sigma)(x_1,\dots,x_r)=(-1)^\alpha\mu(x_{\sigma^{-1}(1)},\dots,x_{\sigma^{-1}(r)}),\]
for any permutation $\sigma\in\mathbb{S}_r$, where $\alpha$ is as in Definition \ref{massey_product},
\begin{multline*}
    d(\mu(x_1,\dots,x_r))+\sum_{s=1}^r(-1)^{\beta}\mu(x_1,\dots,d(x_s),\dots,x_r)
    \\
    +\sum_{(\mu)}(-1)^{\gamma}\mu^{(1)}(x_{\sigma^{-1}(1)},\dots,\mu^{(2)}(x_{\sigma^{-1}(l)},\dots),\dots)=0,
\end{multline*}
where
\begin{align*}
    \beta  & =\abs{\mu}+\sum_{t=1}^{s-1}\abs{x_t},
\end{align*}
and $\gamma$ is again as in Definition \ref{massey_product}, 
and
\begin{equation}\label{one_vanishes}
    1(x)=0,
\end{equation}
where $1\in\K{\O{O}}(1)_0$ is given by the coaugmentation $\unit\to\K{\O{O}}$. This description can also be found in \cite[\S10.1.2]{loday_vallette_2012_algebraic_operads}.

Denote by 
\[s\colon \chain\To\chain\] 
the usual suspension functor in the category of chain complexes. The Koszul dual cooperad $\K{\O{O}}$ is cogenerated by $sE$ with corelations $s^2R$. In low weights, $(\K{\O{O}})^{(0)}=\unit$, $(\K{\O{O}})^{(1)}=sE$, $(\K{\O{O}})^{(2)}=s^2R$.

There is a canonical \emph{twisting morphism} $\kappa\colon\K{\O{O}}\to\O{O}$ with 
\begin{equation}\label{kappa}
     \kappa(s\mu)=\mu , \qquad  \mu\in E .
\end{equation}
It has homological degree $-1$ and vanishes in weight $\neq 1$. We can use it to pull back any $\O{O}$-algebra structure to an $\Oinf{\O{O}}$-algebra structure. We say that an $\Oinf{\O{O}}$-algebra structure on $A$ \emph{extends} a given graded $\O{O}$-algebra structure if
\begin{equation}\label{extends}
    (s\mu)(x_1,\dots,x_r)=\mu(x_1,\dots,x_r),\qquad \mu\in E(r).
\end{equation}

Given $\Gamma\in R$ as in \eqref{relation}, the infinitesimal decomposition satisfies
\begin{equation}\label{link}
        \Delta_{(1)}(s^2\Gamma)=1\circ_1(s^2\Gamma)+\sum_{i=1}^r(s^2\Gamma)\circ_i1
    +\sum(-1)^{\abs{\mu^{(1)}}}(s\mu^{(1)}\circ_l s\mu^{(2)})\cdot\sigma.
\end{equation}
This links the different meanings of $\circ_l$ in \eqref{relation} and \eqref{sweedler_infinitesimal}.

Let us now recall the definition of $\infty$-morphisms in terms of $\K{\O{O}}$. Denote the \emph{decomposition law} of $\K{\O{O}}$ by using a Sweedler notation like in \cite[\S5.8.1]{loday_vallette_2012_algebraic_operads},
\begin{gather*}
    \Delta\colon\K{\O{O}}\To \K{\O{O}}\circ\K{\O{O}},\\
    \Delta(\mu)=\sum_{[\mu]}(\nu;\nu^{1},\dots,\nu^{l})\cdot\tau.
\end{gather*}
An \emph{$\infty$-morphism} between $\Oinf{\O{O}}$-algebras $f\colon A\leadsto B$ is given by module morphisms
\begin{align*}
    \K{\O{O}}(r)_{n}\otimes A_{p_1}\otimes \cdots\otimes A_{p_r} & \To B_{n+\sum_{i=1}^r p_i},       \\
    \mu\otimes x_1\otimes\cdots\otimes x_r                                   & \;\mapsto\;f(\mu)(x_1,\dots,x_r),
\end{align*}
satisfying the following equations:
\[f(\mu\cdot\sigma)(x_1,\dots,x_r)=(-1)^\alpha f(\mu)(x_{\sigma^{-1}(1)},\dots,x_{\sigma^{-1}(r)}),\]
for any permutation $\sigma\in\mathbb{S}_r$, and
\begin{multline}\label{equation_infinity_morphism}
    \sum_{(\mu)}(-1)^{\gamma}f(\mu^{(1)})(x_{\sigma^{-1}(1)},\dots,\mu^{(2)}(x_{\sigma^{-1}(l)},\dots),\dots)\\
    -\sum_{[\mu]}(-1)^{\lambda}\nu(f(\nu^1)(x_{\tau^{-1}(1)},\dots),\dots,f(\nu^l)(\dots, x_{\tau^{-1}(r)}))\\
    =
    d(f(\mu)(x_1,\dots,x_r))-\sum_{t=1}^t(-1)^{\beta}f(\mu)(x_1,\dots,d(x_t),\dots,x_r),
\end{multline}
where $\lambda$ consists of adding up
\[\sum_{\substack{s<t\\\tau(s)>\tau(t)}}\abs{x_s}\abs{x_t}\]
and
\[|\nu^u| |x_{\tau^{-1}(v)}|\]
whenever $\nu^u$ appears after $x_{\tau^{-1}(v)}$.

The \emph{underlying morphism} of an $\infty$-morphism $f\colon A\leadsto B$ is the chain map $f(1)\colon A\to B$. An \emph{$\infty$-quasi-isomorphism} is an $\infty$-morphism whose underlying chain map is a quasi-isomorphism.

The \emph{composition} of $f$ with another $\infty$-morphism $g\colon B\leadsto C$ is given by
\begin{align*}
    (gf)(\mu)(x_1,\dots,x_r) & =\sum_{[\mu]}(-1)^\lambda g(\nu)(f(\nu_1)(x_{\tau^{-1}(1)},\dots),\dots,f(\nu_l)(\dots,x_{\tau^{-1}(r)})).
\end{align*}
In this way, we can consider the category of $\Oinf{\O{O}}$-algebras and $\infty$-morphisms between them. 

\begin{theorem}\label{massey_infinity}
    Given a DG-$\O{O}$-algebra $A$, an $\Oinf{\O{O}}$-algebra structure on $H_*(A)$ defining a minimal model $f\colon H_*(A)\leadsto A$, a relation $\Gamma\in R(r)$ as in \eqref{relation}, and $x_1,\dots,x_r\in H_*(A)$ satisfying the vanishing conditions in \eqref{vanishing}, then
    \[(s^2\Gamma)(x_1,\dots,x_r)\in \langle x_1,\dots,x_r\rangle_\Gamma,\]
    where $\langle x_1,\dots,x_r\rangle_\Gamma$ is a coset in 
    $H_{\abs{\Gamma}+\sum_{i=1}^r\abs{x_i}+1}(A)$.
\end{theorem}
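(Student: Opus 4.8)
The plan is to evaluate the defining equations \eqref{equation_infinity_morphism} of the $\infty$-morphism $f$ on the element $s^2\Gamma\in\K{\O{O}}(r)_{\abs{\Gamma}+2}$ and on the classes $x_1,\dots,x_r$, and then to read off the assertion by comparing with the representative \eqref{representative}. Throughout I would use three standard features of a minimal model: the underlying chain map $f(1)$ sends each homology class to a representing cycle, so I may take $y_i=f(1)(x_i)$; the $\Oinf{\O{O}}$-structure on $H_*(A)$ extends the induced $\O{O}$-structure, so $(s\mu)(x_{j_1},\dots,x_{j_k})=\mu(x_{j_1},\dots,x_{j_k})$ for $\mu\in E$ by \eqref{extends}; and, on $A$, the $\Oinf{\O{O}}$-structure maps attached to $\K{\O{O}}$ vanish in weights $\neq 1$ because $\kappa$ does, and equal the $\O{O}$-operations in weight $1$ by \eqref{kappa}. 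I would also use that $1(-)=0$ on both $H_*(A)$ and $A$ by \eqref{one_vanishes} and that the differential of $H_*(A)$ is zero.

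First I would fix, for each summand of $\Gamma$, a convenient auxiliary element $\rho^{(2)}$ as in Definition \ref{massey_product}. Applying \eqref{equation_infinity_morphism} to $\mu=s\mu^{(2)}\in\K{\O{O}}(r_2)$ and the arguments $x_{\sigma^{-1}(l)},\dots,x_{\sigma^{-1}(l+r_2-1)}$, and using that $\Delta_{(1)}$ and $\Delta$ are both trivial on the weight-one element $s\mu^{(2)}$: the terms involving $1$ drop out, the term $1\circ_1(s\mu^{(2)})$ contributes $f(1)$ of $\mu^{(2)}(x_{\sigma^{-1}(l)},\dots)=0$ by the vanishing hypothesis \eqref{vanishing} and hence vanishes, the term $(s\mu^{(2)};1,\dots,1)$ contributes $\pm\mu^{(2)}(y_{\sigma^{-1}(l)},\dots,y_{\sigma^{-1}(l+r_2-1)})$, and the right-hand side collapses to $d(f(s\mu^{(2)})(x_{\sigma^{-1}(l)},\dots))$. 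So, up to a sign to be pinned down, $\rho^{(2)}:=f(s\mu^{(2)})(x_{\sigma^{-1}(l)},\dots,x_{\sigma^{-1}(l+r_2-1)})$ is an admissible choice in \eqref{massey_choice}.

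Then I would apply \eqref{equation_infinity_morphism} to $\mu=s^2\Gamma$ and $x_1,\dots,x_r$, expanding $\Delta_{(1)}(s^2\Gamma)$ via \eqref{link} and $\Delta(s^2\Gamma)$ by its parallel decomposition. In the first sum: the terms $(s^2\Gamma)\circ_i 1$ vanish since $1(x_i)=0$; the terms coming from $(s\mu^{(1)}\circ_l s\mu^{(2)})\cdot\sigma$ vanish since $(s\mu^{(2)})(x_{\sigma^{-1}(l)},\dots)=\mu^{(2)}(x_{\sigma^{-1}(l)},\dots)=0$ by \eqref{vanishing}; and the term $1\circ_1(s^2\Gamma)$ contributes $f(1)$ of $(s^2\Gamma)(x_1,\dots,x_r)$, a cycle representing the class we are after. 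In the second sum: the summand with $\nu=1$ vanishes because $1(-)=0$ on $A$, the summand with all $\nu^u=1$ vanishes because the weight-two structure map on $A$ is zero, and the summands with $\nu=s\mu^{(1)}$ and a single $\nu^u=s\mu^{(2)}$ add up to $\pm\sum(-1)^{\gamma}\mu^{(1)}(y_{\sigma^{-1}(1)},\dots,\rho^{(2)},\dots,y_{\sigma^{-1}(r)})$, that is, to $\pm$ the cycle \eqref{representative}, with $\rho^{(2)}$ taken as above. Finally the right-hand side is $d(f(s^2\Gamma)(x_1,\dots,x_r))$, a boundary. Hence the cycle $f(1)((s^2\Gamma)(x_1,\dots,x_r))$ and the cycle \eqref{representative} are homologous, which gives $(s^2\Gamma)(x_1,\dots,x_r)\in\langle x_1,\dots,x_r\rangle_\Gamma$.

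I expect the only genuine difficulty to be the signs: one must reconcile the exponents $\gamma$ and $\lambda$ of \eqref{equation_infinity_morphism} with the exponent $\gamma$ of \eqref{representative}, keeping careful track of the suspension shifts $\abs{s\mu^{(i)}}=\abs{\mu^{(i)}}+1$, of the sign $(-1)^{\abs{\mu^{(1)}}}$ in \eqref{link}, and of the sign chosen for $\rho^{(2)}$ in the previous step, so that the inclusion holds on the nose rather than merely up to sign. Everything else is a direct substitution once the (co)operadic decompositions of $s\mu^{(2)}$ and $s^2\Gamma$ are written out explicitly.
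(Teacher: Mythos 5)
Your proposal is correct and follows essentially the same route as the paper's proof: take $y_i=f(1)(x_i)$, obtain $\rho^{(2)}$ (up to the sign $-(-1)^{\lambda}$) from \eqref{equation_infinity_morphism} applied to $s\mu^{(2)}$, then evaluate \eqref{equation_infinity_morphism} on $s^2\Gamma$ using \eqref{link} and the parallel decomposition \eqref{decomposition_relation}, so that the surviving terms exhibit $f(1)((s^2\Gamma)(x_1,\dots,x_r))$ as homologous to the representative \eqref{representative}. The only detail left implicit is the exact sign of $\rho^{(2)}$, which the paper fixes as $\rho^{(2)}=-(-1)^{\lambda}f(s\mu^{(2)})(x_{\sigma^{-1}(l)},\dots)$, exactly as you anticipated.
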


\begin{proof}
    In order to construct a representative of the Massey product, we define
    \begin{align*}
        y_i&=f(1)(x_i),\\
        \rho^{(2)}&=-(-1)^{\lambda}f(s\mu^{(2)})(x_{\sigma^{-1}(l)},\dots,x_{\sigma^{-1}(l+r_2-1)}).
    \end{align*}
    This makes sense because $\mu^{(2)}\in E$ so
    \begin{align*}
        \Delta_{(1)}(\mu^{(2)})&=1\circ_1\mu^{(2)}+\sum_{i=1}^{r_2}\mu^{(2)}\circ_i1,&
        \Delta(\mu^{(2)})&=(1;\mu^{(2)})+(\mu^{(2)};1,\stackrel{r}{\dots},1),
    \end{align*}
    hence we derive \eqref{massey_choice} from \eqref{one_vanishes} and \eqref{equation_infinity_morphism}.

    The non-trivial part of the decomposition of $s^2\Gamma\in s^2R=\K{\O{O}}_{(2)}(r)$ is essentially the same as the non-trivial part of its infinitesimal decomposition \eqref{link},
    \begin{equation}\label{decomposition_relation}
        \begin{split}
            \Delta(s^2\Gamma)={}&(1;s^2\Gamma)+(s^2\Gamma;1,\stackrel{r}{\dots},1)\\
            &+\sum(-1)^{\abs{\mu^{(1)}}}(s\mu^{(1)};1,\stackrel{l-1}{\dots},1,s\mu^{(2)},1,\stackrel{r_1-l}{\dots},1)\cdot\sigma.
        \end{split}
    \end{equation}
    Using \eqref{equation_infinity_morphism},
    \begin{multline*}
        d(f(s^2\Gamma)(x_1,\dots,x_r))\\
        =f(1)((s^2\Gamma)(x_1,\dots,x_r))+\sum_{i=1}^rf((s^2\Gamma))(x_1,\dots,\underbrace{1(x_i)}_{=0},\dots,x_r)\\
        +\sum(-1)^{\abs{\mu^{(1)}}+\gamma}f(s\mu^{(1)})(x_{\sigma^{-1}(1)},\dots,(s\mu^{(2)})(x_{\sigma^{-1}(l)},\dots),\dots)\cdot\sigma\\
        -\underbrace{1((s^2\Gamma)(x_1,\dots,x_r))}_{=0}-\underbrace{\kappa(s^2\Gamma)}_{=0}(f(1)(x_1),\dots,f(1)(x_r))\\
        -\sum(-1)^{\gamma}(s\mu^{(1)})(f(1)(x_{\sigma^{-1}(1)}),\dots,f(s\mu^{(2)})(x_{\sigma^{-1}(l)},\dots),\dots)\\
        =f(1)((s^2\Gamma)(x_1,\dots,x_r))\\
        +\sum(-1)^{\abs{\mu^{(1)}}+\gamma}f(s\mu^{(1)})(x_{\sigma^{-1}(1)},\dots,\underbrace{\mu^{(2)}(x_{\sigma^{-1}(l),}\dots)}_{=0},\dots)\cdot\sigma\\
        -\sum(-1)^{\gamma}\mu^{(1)}(y_1,\dots,y_{l-1},\rho^{(2)},y_{l+1},\dots,y_{r_1})\\
        =f(1)((s^2\Gamma)(x_1,\dots,x_r))\\
        -\sum(-1)^{\gamma}\mu^{(1)}(y_1,\dots,y_{l-1},\rho^{(2)},y_{l+1},\dots,y_{r_1}).
    \end{multline*}
    Since $f(1)$ sends any homology class to a representing cycle, $f(1)((s^2\Gamma)(x_1,\dots,x_r))$ represents $(s^2\Gamma)(x_1,\dots,x_r)$. The last summation is, by definition, a cycle whose cohomology class represents $\langle x_1,\dots,x_r\rangle_\Gamma$, hence the proposition follows.
\end{proof}

As mentioned in the introduction, the structure morphisms of a minimal $\O{O}_\infty$-model $H_*(A)$ sometimes receive the name of Massey products, see e.g.~\cite{galvez-carrillo_tonks_vallette_2012_homotopy_batalinvilkovisky_algebras,loday_vallette_2012_algebraic_operads}. The closest operadic analogue of classical Massey products is Definition \ref{massey_product}. The previous result establishes the connection between both notions. We think that this is the strongest possible connection between minimal models and Massey-product-like operations since, even in the associative case, classical higher Massey products are in general unrelated to the higher operations on the minimal model, see \cite{buijs_moreno-fernandez_murillo_2020_infty_structures_massey}.

Recall that a DG-$\O{O}$-algebra $A$ is \emph{formal} if it is quasi-isomorphic to $H_*(A)$, or equivalently if there exists a minimal model $H_*(A)\leadsto A$ where $H_*(A)$ here is equipped with the $\Oinf{\O{O}}$-algebra structure pulled back from the induced $\O{O}$-algebra structure, in particular $s^2\Gamma$ operates trivially on $H_*(A)$ for any $\Gamma\in R$.

\begin{corollary}
    If $A$ is a formal DG-$\O{O}$-algebra then all Massey products in $H_*(A)$ vanish.
\end{corollary}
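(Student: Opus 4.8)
The plan is to read this off from Theorem~\ref{massey_infinity}. Since $\Bbbk$ is a field, $H_*(A)$ is automatically projective, so that theorem applies to any minimal model of $A$; and formality of $A$ means, by the equivalent characterization recalled just above, that we may choose a minimal model $f\colon H_*(A)\leadsto A$ for which the $\Oinf{\O{O}}$-algebra structure on $H_*(A)$ is the one pulled back along the canonical twisting morphism $\kappa\colon\K{\O{O}}\To\O{O}$ from the induced graded $\O{O}$-algebra structure.

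First I would record what this pulled-back structure does to a corelation $s^2\Gamma$. Under the $\kappa$-pullback, the structure map associated with $\mu\in\K{\O{O}}(r)$ is $\mu(x_1,\dots,x_r)=\kappa(\mu)(x_1,\dots,x_r)$, the right-hand side using the genuine $\O{O}$-algebra structure of $H_*(A)$. Now $s^2\Gamma$ lies in $(\K{\O{O}})^{(2)}=s^2R$, which has weight $2$, whereas $\kappa$ vanishes in every weight $\neq 1$; hence $\kappa(s^2\Gamma)=0$ and therefore $(s^2\Gamma)(x_1,\dots,x_r)=0$ for every $x_1,\dots,x_r\in H_*(A)$. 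This is precisely the observation already noted before the statement.

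Then, for any relation $\Gamma\in R(r)$ and any $x_1,\dots,x_r\in H_*(A)$ satisfying the vanishing conditions \eqref{vanishing} --- that is, whenever $\langle x_1,\dots,x_r\rangle_\Gamma$ is defined at all --- Theorem~\ref{massey_infinity} yields $0=(s^2\Gamma)(x_1,\dots,x_r)\in\langle x_1,\dots,x_r\rangle_\Gamma$. Since the Massey product is a coset of its indeterminacy, namely the denominator of \eqref{quotient}, containing $0$ forces the coset to coincide with that subspace; equivalently, the class of the Massey product in \eqref{quotient} is $0$. Thus every defined Massey product in $H_*(A)$ vanishes.

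I expect no real obstacle here: the content is entirely in Theorem~\ref{massey_infinity}, and what remains is the bookkeeping of identifying the formal minimal model with the $\kappa$-pulled-back one and of noting that the projectivity hypothesis of Theorem~\ref{massey_infinity} costs nothing over a field. One could alternatively argue that Massey products are preserved by DG-$\O{O}$-algebra morphisms and that they obviously vanish in $H_*(A)$ equipped with zero differential (take $\rho^{(2)}=0$ in \eqref{representative}), but this would require spelling out how a formality quasi-isomorphism, or a zigzag of such, transports the coset, so the route through Theorem~\ref{massey_infinity} is cleaner.
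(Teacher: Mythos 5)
Your argument is exactly the one the paper intends: the corollary is stated immediately after the remark that formality gives a minimal model whose $\Oinf{\O{O}}$-structure is pulled back along $\kappa$, so that $s^2\Gamma$ operates trivially, and then Theorem~\ref{massey_infinity} places $0=(s^2\Gamma)(x_1,\dots,x_r)$ in the coset $\langle x_1,\dots,x_r\rangle_\Gamma$, forcing it to vanish. Your additional observations (projectivity is automatic over a field; the alternative route via preservation under morphisms) are correct but not needed; the proof matches the paper's.
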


The converse is clearly not true, even for the associative operad. Indeed, any differential graded associative algebra with homology concentrated in even degrees has trivial Massey products for degree reasons, but not all of them are formal.

\section{Connection with Dimitrova's universal class}\label{ump}

Dimitrova defined in \cite{dimitrova_2012_obstruction_theory_operadic} a universal class for any algebra over a graded operad using minimal models, regardless of the ground field. This class is another obstruction to formality. She does not need the operad to be Koszul. Nevertheless, her general definition simplifies in the Koszul case in characteristic zero \cite[Proposition 4.11 and Remark 4.12]{dimitrova_2012_obstruction_theory_operadic} as we now recall.

The \emph{operadic cochain complex} $C_{\O{O}}^{\star,\ast}(A,M)$ of a graded $\O{O}$-algebra $A$ with coefficients in an $A$-module $M$  \cite[\S12.4]{loday_vallette_2012_algebraic_operads} is a bigraded complex given by
\[C_{\O{O}}^{w,t}(A,M)=\hom_{-t-w}((\K{\O{O}})^{(w)}\circ A,M)\]
with differential
\[d\colon C_{\O{O}}^{w,t}(A,M)\To C_{\O{O}}^{w+1,t}(A,M)\]
defined as
\begin{equation*}
    \begin{split}
        d(f)(\mu;x_1,\dots,x_r)={}&\sum_{(\mu)}(-1)^{\gamma'}\kappa(\mu^{(1)})(x_{\sigma^{-1}(1)},\dots,f(\mu^{(2)};x_{\sigma^{-1}(l)},\dots),\dots)\\
        &-\sum_{(\mu)}(-1)^{\gamma+\abs{f}}f(\mu^{(1)};x_{\sigma^{-1}(1)},\dots,\kappa(\mu^{(2)})(x_{\sigma^{-1}(l)},\dots),\dots).
    \end{split}
\end{equation*}
Here we use the notation in \eqref{sweedler_infinitesimal}, $\abs{f}=w+t$ if $f\in C_{\O{O}}^{w,t}(A,M)$,
\begin{align*}
    \gamma' & =\alpha+\abs{\mu^{(1)}}\abs{f}+(\abs{\mu^{(2)}}+\abs{f})\sum_{m=1}^{l-1}\abs{x_{\sigma^{-1}(m)}},
\end{align*}
and $\alpha$ and $\gamma$ are as in Definition \ref{massey_product}.
The \emph{operadic cohomology} is the cohomology of this complex,
$H_{\O{O}}^{\star,\ast}(A,M)$.

Assume now that $A$ is a differential graded $\O{O}$-algebra. Given a minimal model $f\colon H_*(A)\leadsto A$, Dimitrova's class \[\{\m\}\in H_{\O{O}}^{2,-1}(H_*(A),H_*(A))\] can be represented by the cocycle
$\m\in C_{\O{O}}^{2,-1}(H_*(A),H_*(A))$
defined by
\[\m(s^2\Gamma;x_1,\dots,x_r)=(s^2\Gamma)(x_1,\dots,x_r),\qquad \Gamma\in R(r), \quad x_i\in H_*(A).\]
Here we use that $(\K{\O{O}})^{(2)}=s^2R$. In the light of Theorem \ref{massey_infinity}, Dimitrova's class deserves to be called \emph{universal Massey product} after the universal Toda brackets introduced in \cite{baues_dreckmann_1989_cohomology_homotopy_categories}.

In the associative case, universal Massey products go back to \cite{kadeishvili_1988_structure_infty_algebra}. They were studied in detail in \cite{benson_krause_schwede_2004_realizability_modules_tate} with applications to the cohomology of finite groups. There, the authors compute an example where all Massey products vanish but the universal Massey product is non-trivial \cite[Example 5.15]{benson_krause_schwede_2004_realizability_modules_tate}.
Associative universal Massey products have also been recently applied to the existence and uniqueness of enhancements for  triangulated categories \cite{muro_2020_enhanced_finite_triangulated}.

All representatives of Dimitrova's class compute Massey products.

\begin{proposition}
    Let $\phi \in C_{\O{O}}^{2,-1}(H_*(A),H_*(A))$ be a cycle representing Dimitrova's class. Given a relation $\Gamma\in R(r)$ as in \eqref{relation} and $x_1,\dots,x_r\in H_*(A)$ satisfying the vanishing conditions in \eqref{vanishing} then
    \[\phi(s^2\Gamma;x_1,\dots,x_r)\in \langle x_1,\dots,x_r\rangle_{\Gamma}\]
    where $\langle x_1,\dots,x_r\rangle_{\Gamma}$ is a coset in $H_{\abs{\Gamma}+\sum_{i=1}^r\abs{x_i}+1}(A)$.
\end{proposition}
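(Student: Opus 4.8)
The plan is to reduce the statement to Theorem~\ref{massey_infinity} by comparing an arbitrary representative $\phi$ with the canonical cocycle $\m$ constructed just above. Since $\m(s^2\Gamma;x_1,\dots,x_r)=(s^2\Gamma)(x_1,\dots,x_r)$, Theorem~\ref{massey_infinity} already gives $\m(s^2\Gamma;x_1,\dots,x_r)\in\langle x_1,\dots,x_r\rangle_\Gamma$. As $\phi$ and $\m$ are cocycles in $C_{\O{O}}^{2,-1}(H_*(A),H_*(A))$ representing the same cohomology class, there is a cochain $\psi\in C_{\O{O}}^{1,-1}(H_*(A),H_*(A))$ with $\phi=\m+d(\psi)$, so that
\[\phi(s^2\Gamma;x_1,\dots,x_r)=\m(s^2\Gamma;x_1,\dots,x_r)+d(\psi)(s^2\Gamma;x_1,\dots,x_r).\]
Because $\langle x_1,\dots,x_r\rangle_\Gamma$ is a coset of its indeterminacy, that is, of the denominator of~\eqref{quotient}, it suffices to prove that $d(\psi)(s^2\Gamma;x_1,\dots,x_r)$ lies in that denominator.

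To compute $d(\psi)(s^2\Gamma;x_1,\dots,x_r)$ I would substitute $\mu=s^2\Gamma$ into the formula for the operadic differential. The cochain $\psi$ is supported on $(\K{\O{O}})^{(1)}=sE$, and $\kappa$ vanishes in weight $\neq 1$, so every term of the formula in which $\mu^{(1)}$ or $\mu^{(2)}$ has weight $0$ or $2$ drops out; consequently only the part of $\Delta_{(1)}(s^2\Gamma)$ landing in $sE\circ_{(1)}sE$ contributes. By~\eqref{link} this part is $\sum(-1)^{\abs{\mu^{(1)}}}(s\mu^{(1)}\circ_l s\mu^{(2)})\cdot\sigma$, indexed by the summands of the presentation $\Gamma=\sum(\mu^{(1)}\circ_l\mu^{(2)})\cdot\sigma$. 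Using $\kappa(s\mu^{(i)})=\mu^{(i)}$, the first sum of the differential produces terms of the form
\[\pm\,\mu^{(1)}\bigl(x_{\sigma^{-1}(1)},\dots,\psi(s\mu^{(2)};x_{\sigma^{-1}(l)},\dots,x_{\sigma^{-1}(l+r_2-1)}),\dots\bigr),\]
and the second sum produces terms of the form
\[\pm\,\psi\bigl(s\mu^{(1)};x_{\sigma^{-1}(1)},\dots,\mu^{(2)}(x_{\sigma^{-1}(l)},\dots,x_{\sigma^{-1}(l+r_2-1)}),\dots\bigr).\]

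The terms of the second form vanish: by the vanishing hypothesis~\eqref{vanishing} one has $\mu^{(2)}(x_{\sigma^{-1}(l)},\dots,x_{\sigma^{-1}(l+r_2-1)})=0$ in $H_*(A)$ for each summand, and $\psi$ is multilinear. For the terms of the first form, a degree count — $\psi$ is degree preserving and $s$ raises degree by one — shows that $\psi(s\mu^{(2)};x_{\sigma^{-1}(l)},\dots,x_{\sigma^{-1}(l+r_2-1)})$ lies in $H_{\abs{\mu^{(2)}}+\sum_{i=1}^{r_2}\abs{x_{\sigma^{-1}(l+i-1)}}+1}(A)$; hence each such term lies in the summand $\mu^{(1)}\bigl(x_{\sigma^{-1}(1)},\dots,x_{\sigma^{-1}(l-1)},H_{\abs{\mu^{(2)}}+\sum_{i=1}^{r_2}\abs{x_{\sigma^{-1}(l+i-1)}}+1}(A),x_{\sigma^{-1}(l+r_2)},\dots,x_{\sigma^{-1}(r_1)}\bigr)$ of the denominator of~\eqref{quotient}. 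Therefore $d(\psi)(s^2\Gamma;x_1,\dots,x_r)$ lies in the indeterminacy, and $\phi(s^2\Gamma;x_1,\dots,x_r)$ and $\m(s^2\Gamma;x_1,\dots,x_r)$ represent the same coset, which proves the claim. I expect the main difficulty to be the bookkeeping in the middle paragraph: checking carefully that, for weight and degree reasons, the operadic differential of $\psi$ evaluated on $s^2\Gamma$ collapses to exactly these two families of terms, with no contribution from the $1\circ_1(s^2\Gamma)$ or $(s^2\Gamma)\circ_i 1$ pieces of $\Delta_{(1)}(s^2\Gamma)$; once this is established, the rest is a submodule membership statement insensitive to signs.
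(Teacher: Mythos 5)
Your proposal is correct and follows essentially the same route as the paper's proof: compare $\phi$ with the minimal-model representative $\m$ via a cochain $\psi$ with $d(\psi)=\phi-\m$, invoke Theorem~\ref{massey_infinity} for $\m$, and show $d(\psi)(s^2\Gamma;x_1,\dots,x_r)$ lies in the indeterminacy by using \eqref{link}, the vanishing of $\kappa$ outside weight one, the hypothesis \eqref{vanishing}, and the degree count $\abs{s\mu^{(2)}}=\abs{\mu^{(2)}}+1$. The bookkeeping you flag as the main difficulty works out exactly as you describe, so nothing is missing.
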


\begin{proof}
    We take a cochain $\xi \in C_{\O{O}}^{1,-1}(H_*(A),H_*(A))$ relating $\phi$ to the representative $\m$ defined from a minimal model, $d(\xi)=\phi-\m$. 
    Since
    \[
        \phi(s^2\Gamma;x_1,\dots,x_r)=\m(s^2\Gamma;x_1,\dots,x_r)+d(\xi)(s^2\Gamma;x_1,\dots,x_r),
    \]
    by Theorem \ref{massey_infinity} it suffices to check that the last summand lies in the denominator of \eqref{quotient}. Using \eqref{link}, the definition of the canonical twisting morphism $\kappa$ in \eqref{kappa}, and the vanishing condition \eqref{vanishing},
    \begin{multline*}
        d(\xi)(s^2\Gamma;x_1,\dots,x_r)=\\
        \sum(-1)^{\gamma}\mu^{(1)}(x_{\sigma^{-1}(1)},\dots,\xi(s\mu^{(2)};x_{\sigma^{-1}(l)},\dots),\dots)\\
        +\sum(-1)^{\alpha+\abs{\mu^{(2)}}\sum_{m=1}^{l-1}\abs{x_{\sigma^{-1}(m)}}}\xi(s\mu^{(1)};x_{\sigma^{-1}(1)},\dots,\underbrace{\mu^{(2)}(x_{\sigma^{-1}(l)},\dots)}_{=0},\dots).
    \end{multline*}
    This concludes the proof since $|s\mu^{(2)}|=\abs{\mu^{(2)}}+1$ so
    \[\xi(s\mu^{(2)};x_{\sigma^{-1}(l)},\dots)\in H_{\abs{\mu^{(2)}}+\sum_{i=1}^{r_2}\abs{x_{\sigma^{-1}(l+i-1)}}+1}(A).\]
\end{proof}

\begin{remark}\label{hypotheses}
    Despite we nominally work with a graded Koszul operad $\O{O}$ defined over a field of characteristic zero, these hypotheses can be weakened. In \S\ref{firstsection} we only need an operad $\O{O}$ with some quadratic relation $\Gamma$, we do not need all relations to be quadratic or the operad to be Koszul, nor we need the ground ring to be a field. For Theorem \ref{massey_infinity} we need a quadratic operad and minimal models, but the operad need not be Koszul. Minimal models can be obtained over arbitrary ground fields from the usual transfer theorem \cite[\S10.3]{loday_vallette_2012_algebraic_operads}. Actually, this theorem allows for the construction of a minimal model for an $\O{O}$-algebra $A$ with underlying cofibrant complex and projective homology $H_*(A)$ over an arbitrary ground ring. We consider the projective model structure on chain complexes. Indeed, by the projectivity hypothesis we can choose a representing cycle selection chain map $i\colon H_*(A)\to A$, which is a quasi-isomorphism by construction. Since $i$ is a quasi-isomorphism between fibrant-cofibrant objects, it  admits a homotopy retraction $p\colon A\to H_*(A)$. The composite $ri$ must be the identity since homotopic self maps of $H_*(A)$ must be equal because its differential is trivial. This is all we need for the transfer theorem. If we want to drop the cofibrancy hypothesis on $A$ we must be able to replace it with an $\O{O}$-algebra $\tilde A$ with underlying cofibrant complex. This can be achieved if $\O{O}$ is $\mathbb{S}$-cofibrant, see \cite[Theorem 12.3.A and Proposition 12.3.2]{fresse_2009_modules_operads_functors}. In this case, we first take such a replacement $\tilde A\to A$ and then construct a minimal model for $\tilde A$ by using the transfer theorem. This yields a minimal model for $A$ after composing with $\tilde A\to A$. For the previous description of Dimitrova's class we also need $\O{O}$ to be Koszul. This condition is also necessary for minimal models to be homotopically meaningful, but this is independent from the definition of Massey products.
\end{remark}

\section{Examples}

We have already mentioned in Remark \ref{basic_examples} that usual Massey products for associative algebras and Lie-Massey products fit into our framework. We finish this paper by computing non-trivial Massey products in DG-Gerstenhaber and hypercommutative algebras arising in nature.

\begin{definition}
    A \emph{Gerstenhaber algebra} $A$ is a graded vector space equipped with a commutative algebra structure with product $a b$ and a degree $1$ Lie algebra structure with bracket $[a,b]$ satisfying the following relation, known as \emph{Gerstenhaber relation}, 
    \begin{align*}
        [a,b\cdot c] & =[a,b]\cdot c+(-1)^{(\abs{a}-1)\abs{b}}b\cdot[a,c].
    \end{align*}
\end{definition}


The operad $\O{G}$ governing Gerstenhaber algebras is generated by the $\mathbb{S}$-module $E$ with
\begin{align*}
    E(2)_0   & = \kk\cdot c, &
    E(2)_{1} & = \kk\cdot l,
\end{align*}
with the trivial action of $\mathbb{S}_2$ and $E(r)_n=0$ elsewhere. The elements $c$ and $l$ correspond to the commutative product, $a\cdot b=c(a,b)$ and the Lie bracket $[a,b]=(-1)^{\abs{a}}l(a,b)$ of a $\O{G}$-algebra $A$. The sub-$\mathbb{S}$-module of relations $R\subset\F(E)^{(2)}$ is generated by the arity $3$ elements
\begin{gather*}
    c\circ_1c-c\circ_2c,\\
    (l\circ_1l)\cdot[()+(1\,2\,3)+(3\,2\,1)],\\
    l\circ_2 c-c\circ_1l-(c\circ_2l)\cdot(1\;2),
\end{gather*}
in degrees $0,2,1$. These elements reflect the associative relation, the Jacobi identity, and the Gerstenhaber relation. The operad $\O{G}$ is Koszul by \cite{Getzler1994}.

The following examples of $\O{G}$-algebras will be cochain complexes, rather than chain complexes like until now. Therefore, we must switch to cohomological degrees in the usual way $H^n=H_{-n}$.

Recall that the \emph{Chevalley--Eilenberg complex} $C^*(\L{g},\kk)$ of a finite-dimensional Lie algebra $\L{g}$ is the exterior algebra $\bigwedge\L{g}^*$, i.e.~the free graded unital commutative algebra generated by the dual vector space $\L{g}^*$ in degree $1$. This vector space $\L{g}^*$ is a Lie coalgebra whose structure map
\[C^1(\L{g},\kk)=\L{g}^*\stackrel{d}{\To}\wedge^2\L{g}^*=C^2(\L{g},\kk)\]
defines the differential of the DG-commutative algebra $C^*(\L{g},\kk)$. This complex computes the cohomology $H^*(\L{g},\kk)$ of $\L{g}$ with coefficients in the trivial $\L{g}$-module $\kk$. If $\kk=\mathbb{Q}$ and $\L{g}$ is nilpotent, then $C^*(\L{g}\otimes\mathbb{R},\mathbb{R})$ is quasi-isomorphic to the de Rham complex $\Omega(M)$ of a compact homogeneous space $M$ of the simply connected Lie group with Lie algebra $\L{g}\otimes\mathbb{R}$ \cite{Nomizu1954}.

If $\L{g}$ is a Lie bialgebra, i.e.~a Lie algebra equipped with a compatible Lie coalgebra structure, then $C^*(\L{g},\kk)$ becomes a Gerstenhaber algebra \cite[Example 3.1]{kosmann-schwarzbach_1995_exact_gerstenhaber_algebras}. Its Lie bracket is the only Gerstenhaber extension of the Lie bracket on $\L{g}^*$ dual to the Lie coalgebra structure of $\L{g}$. Over $\kk=\mathbb{R}$, Lie bialgebras correspond to simply connected Poisson Lie groups in the same way as Lie algebras correspond to simply connected Lie groups \cite{drinfelcprime_d_1983_hamiltonian_structures_lie}. In particular, if $\kk=\mathbb{Q}$ and $\L{g}$ is nilpotent as above, the DG-Gerstenhaber algebra $C^*(\L{g}\otimes\R,\R)$ is an invariant of $M$ as a Poisson manifold.

We consider the $3$-dimensional Heisenberg Lie algebra $\L{h}$. It is defined over any characteristic zero field. For $\kk=\R$, the associated homogeneous space $M$ is the well-known Heisenberg manifold. The dual $\L{h}^*$ has a basis $\{x,y,z\}$
with Lie cobracket $d\colon \L{h}^*\to\wedge^2\L{h}^*$ defined by
\begin{align*}
    d(x) & =0,   &
    d(y) & =0,   &
    d(z) & =x y.
\end{align*}
We endow $\L{h}$ with the Lie bialgebra structure such that the induced Lie bracket on $\L{h}^*$ is given by \begin{align*}
    [z,x] & =x,   &
    [z,y] & =x+y, &
    [x,y] & =0.
\end{align*}
The Lie bialgebra $\L{h}^*$ is denoted by $\L{r}_3$ in \cite[Theorem 5.2]{farinati_jancsa_2015_three_dimensional_real}.

The cohomology $H^*(\L{h},\kk)$ has the following bases,
\begin{align*}
    H^0(\L{h},\kk) & \supset\{1\},                           &
    H^1(\L{h},\kk) & \supset\{\bar x,\bar y\},                 \\
    H^2(\L{h},\kk) & \supset\{\overline{xz},\overline{yz}\}, &
    H^3(\L{h},\kk) & \supset\{\overline{xyz}\}.
\end{align*}
We overline representing cocycles.
The product is trivial, except for the product with the unit $1$ and
\[\bar x\overline{yz}=-\bar y\overline{xz}=\overline{xyz}.\]

\begin{proposition}\label{lie}
    Let $\Gamma=l\circ_2 c-c\circ_1l-(c\circ_2l)(1\;2)\in R$ be the Gerstenhaber relation. For the previous Lie bialgebra $\L{h}$, the following Massey product is well defined, it has no indeterminacy, and it is non-trivial,
    \[\langle\overline{yz},\bar x,\bar y\rangle_\Gamma=2\overline{xz}\in H^2(\L{h},\kk).\]
\end{proposition}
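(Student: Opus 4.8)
The plan is to unwind Definition~\ref{massey_product} for the specific relation $\Gamma = l\circ_2 c - c\circ_1 l - (c\circ_2 l)(1\,2)$ and the triple $(x_1,x_2,x_3) = (\overline{yz},\bar x,\bar y)$, working with explicit cocycle representatives in $C^*(\L{h},\kk) = \bigwedge\L{h}^*$. First I would record the $\O{G}$-algebra operations concretely: the commutative product is the exterior product, and the degree $1$ Lie bracket $[a,b] = (-1)^{\abs a} l(a,b)$ is the unique Gerstenhaber extension of the bracket $[z,x]=x$, $[z,y]=x+y$, $[x,y]=0$ on $\L{h}^* = H^1$; in particular $[x,y]=0$ on generators but the Gerstenhaber relation forces e.g.\ $[z, xy] = [z,x]y + x[z,y]$ etc., so I need the value of the bracket on the degree $2$ and $3$ cochains that actually occur. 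Then I would identify, for each of the three summands $\mu^{(1)}\circ_l\mu^{(2)}$ of $\Gamma$, which inner operation $\mu^{(2)}$ is applied to which two of the three arguments (in the permuted order $\sigma$), check the vanishing condition \eqref{vanishing}, and choose the bounding cochains $\rho^{(2)}$.

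The key computation is the choice of the three $\rho^{(2)}$'s. For the summand $l\circ_2 c$ (identity permutation), the inner operation is $c(\bar x,\bar y) = \bar x\,\bar y = \overline{xy}$; but $\overline{xy}$ is a coboundary, $d(z) = xy$, so we may take $\rho = z$ — except one must check $\overline{xy} = 0$ in cohomology, which it is since $xy = d(z)$, so indeed $xy$ is already a \emph{cochain} that is a coboundary and $\rho^{(2)} = z$ works. For the summand $c\circ_1 l$, the inner operation is $l$ applied to $(\overline{yz},\bar x)$, i.e.\ up to sign $[\overline{yz},\bar x]$; I must verify this vanishes in $H^*$ and pick a bounding cochain, using the Gerstenhaber relation $[yz,x] = [y,x]z + (-1)^{\abs y \cdot 1} y[z,x] = 0\cdot z - y\cdot x = -yx = xy = d(z)$, so again $\rho^{(2)}$ can be taken proportional to $z$. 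Similarly for the third summand $(c\circ_2 l)(1\,2)$, the inner operation is $l$ on the pair $(\overline{yz},\bar y)$ after transposing, giving $[yz,y] = [y,y]z + (-1)^{\abs y} y[z,y] = 0 - y(x+y) = -yx = d(z)$ again. With all three $\rho^{(2)} = \pm z$ (up to scalars I will compute carefully), the representative \eqref{representative} becomes $\pm l(z, \bar y) \pm c(z,\bar x) \pm c(\bar x, z)$ or the analogous combination of $[z,\bar y]$, $z\,\bar x$, contracted appropriately — a degree $2$ cochain — and I then evaluate it: $[z,y] = x+y$ contributes, $z\wedge x = -xz$ and $x\wedge z = xz$ type terms cancel or add, and after tracking the signs $\gamma$ from Definition~\ref{massey_product} the total should collapse to $2\,xz$, whose class is $2\,\overline{xz}$.

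Finally I would treat the indeterminacy: the denominator of \eqref{quotient} is a sum of terms $\mu^{(1)}(\dots, H^*(A), \dots)$ where $\mu^{(1)}$ is (up to sign) exterior product or bracket with $\bar x$ or $\bar y$ in the appropriate degree. Because the product on $H^*(\L{h},\kk)$ is essentially trivial (only products with $1$ and $\bar x\,\overline{yz} = -\bar y\,\overline{xz} = \overline{xyz}$ are nonzero) and the induced bracket on $H^*$ can be checked to vanish in the relevant degrees, each such term lands in a zero subspace of $H^2$, so the indeterminacy vanishes and the Massey product is the single element $2\,\overline{xz}$, which is nonzero in $H^2(\L{h},\kk)$. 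The main obstacle I anticipate is bookkeeping: correctly reading off the permutation $\sigma$ and the slot $l$ for each of the three summands of the Gerstenhaber relation, and then propagating the exponents $\gamma = \alpha + \abs{\mu^{(1)}} + (\abs{\mu^{(2)}}-1)\sum_{m<l}\abs{x_{\sigma^{-1}(m)}}$ together with the sign $(-1)^{\abs a}$ in $[a,b]=(-1)^{\abs a}l(a,b)$ — a single sign error anywhere turns $2\,\overline{xz}$ into $0$ or $-2\,\overline{xz}$. I would double-check the final sign by also computing the associated minimal model operation $(s^2\Gamma)(\overline{yz},\bar x,\bar y)$ via Theorem~\ref{massey_infinity} as an independent consistency check.
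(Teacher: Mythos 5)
Your plan is the same computation the paper carries out: verify the three cochain-level vanishings, choose bounding cochains proportional to $z$, assemble the representative \eqref{representative}, and kill the indeterminacy $[\overline{yz},H^1]+H^1\bar y+\bar x H^1$ using that products of degree-one classes vanish and that $[yz,x]$, $[yz,y]$ are exact. So structurally there is nothing to add.

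However, there is a concrete sign error in your Leibniz expansions that is not just bookkeeping: you compute $[yz,x]=+d(z)$ and $[yz,y]=+d(z)$, whereas the correct values are $[yz,x]=[yz,y]=-d(z)$. The Gerstenhaber relation as stated acts on a product in the \emph{second} slot; to move the product to the first slot you must first apply the shifted antisymmetry $[a,b]=-(-1)^{(\abs{a}-1)(\abs{b}-1)}[b,a]$, which for degree-one inputs gives $[yz,x]=-[x,yz]=-y[x,z]=y[z,x]=yx=-xy=-d(z)$, and similarly $[yz,y]=y[z,y]=y(x+y)=yx=-d(z)$. Your rule $[yz,x]=[y,x]z+(-1)^{\abs{y}}y[z,x]$ has the wrong sign on the second term. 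This matters: with bounding cochains $+z$ instead of $-z$ in the second and third summands, the representative becomes $(x+y)z-zy-xz=2yz$ rather than $(x+y)z+zy+xz=2xz$, i.e.\ you would land on the wrong cohomology class, not merely the wrong overall sign. (A smaller slip: the outer term coming from $l\circ_2c$ is $[yz,z]$ --- the bracket of $yz$ with the bounding cochain --- not $l(z,\bar y)$; it is this term whose Leibniz expansion produces the $(x+y)z$ contribution via $[y,z]=-(x+y)$.) Your proposed cross-check against $(s^2\Gamma)(\overline{yz},\bar x,\bar y)$ would catch this, but as written the computation does not yet yield $2\overline{xz}$. The indeterminacy argument is fine and matches the paper's.
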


\begin{proof}
    The Massey product is defined since, in $C^*(\L{h},\kk)$,
    \begin{align*}
        x y     & = d(z),          \\
        [y z,x] & = y[z,x]+[y,x] z \\
                & =y x             \\
                & =-d(z),          \\
        [y z,y] & = y[z,y]+[y,y] z \\
                & =y (x+y)         \\
                & =-d(z).
    \end{align*}
    The indeterminacy is the following sum
    \[[\overline{y z},H^1(\L{h},\kk)]+ H^1(\L{h},\kk)\bar y+\bar{x} H^1(\L{h},\kk)\subset H^2(\L{h},\kk).\] The last two summands vanish since $x y=d(z)$ and $x^2= y^2=0$. The first one too since $[y z,x]=[y z,y]=-d(z)$. The Massey product is then represented by
    \begin{align*}
        -[y z,z] -(-z) y - x (-z) & =-y[z,z]-[y,z] z + z y + x z \\
                                  & =(x+y) z+ z y + x z          \\
                                  & =2 xz.
    \end{align*}
\end{proof}

The de Rham complex of a general Poisson manifold carries a canonical DG-Gerstenhaber algebra structure \cite{koszul_1985_crochet_schoutennijenhuis_cohomologie}. In the previous example, despite $C^*(\L{h},\kk)$ is quasi-isomorphic to $\Omega(M)$ as a DG-commutative algebra ($M$ the Heisenberg manifold) they are not quasi-isomorphic as DG-Gerstenhaber algebras. This can be detected in cohomology, since the latter induces the trivial Lie bracket, unlike the former, which satisfies
\[[\overline{yz},\overline{yz}]=-2\overline{xyz}.\]
We will revisit this example later because it is related to the operad we consider next.

\begin{definition}
    A \textit{hypercommutative algebra} $A$ is a graded vector space equipped with arity $r$ multiplications of degree $2(r-2)$ for any $r\geq 2$,
    \begin{align*}
        A\otimes\stackrel{r}{\cdots}\otimes A & \To A,                       \\
        x_1\otimes\cdots\otimes x_r           & \;\mapsto\; (x_1,\dots,x_r),
    \end{align*}
    satisfying
    \begin{align*}
        (x_1,\dots,x_r)                                                & =
        \pm(x_{\sigma(1)},\dots,x_{\sigma(r)}),\qquad \sigma\in\mathbb{S}_r, \\
        \sum_{S_1\amalg S_2=\{1,\dots,r\}}\pm((a,b,x_{S_1}),c,x_{S_2}) & =
        \sum_{S_1\amalg S_2=\{1,\dots,r\}}\pm(a,(b,c,x_{S_1}),x_{S_2}).
    \end{align*}
    Here, given an ordered subset $S=\{i_1<\cdots<i_s\}\subset\{1,\dots,r\}$, $x_S$ denotes the sequence $x_{i_1},\dots,x_{i_s}$. In both equations, signs are just determined by the Koszul sign rule with respect to the initial ordering $a,b,c,x_1,\dots,x_r$.
\end{definition}

The arity $2$ operation defines a commutative algebra structure on $A$,
\[a b=(a,b).\]
Hence, hypercommutative algebras are commutative algebras with extra structure.

The relation involving just the arity $2$ and $3$ operations is
\begin{equation}\label{hypercommutative_key_relation}
    (a b, c, x)+(-1)^{\abs{c}\abs{x}}(a,b,x) c=a (b,c,x)+(a,b c,x).
\end{equation}

The operad $\O{H}$ governing hypercommutative algebras is generated by the $\mathbb{S}$-module $E$ with
\[E(r)_{2(r-2)}=\kk\cdot m_r,\qquad r\geq2,\]
with the trivial action of $\mathbb{S}_r$ and $E(r)_n=0$ elsewhere. The element $m_r$ corresponds to the arity $r$ operation. The sub-$\mathbb{S}$-module of relations $R\subset\F(E)^{(2)}$ is generated by the degree $2r$ elements
\[\sum_{\substack{p+q=r\\(p,q)\text{-shuffles}}}(m_{p+2}\circ_1 m_{q+2})\cdot
    (3\;\cdots\; q+3)
    \sigma
    -
    \sum_{\substack{p+q=r\\(p,q)\text{-shuffles}}}(m_{p+2}\circ_2 m_{q+2})\cdot\sigma.\]
Here $r\geq 0$, we consider $(p,q)$-shuffles $\sigma\in\mathbb{S}_r$, and we regard $\mathbb{S}_r\subset \mathbb{S}_{3+r}$ as the subgroup permuting the last $r$ elements. This operad is Koszul \cite{Getzler1995Operads-and-moduli-spaces-of-genus-0-Riemann-surfaces}.

For $r=0$, the previous relation is the associativity relation for the binary operation $m_2$. For $r=1$ it is the relation
\begin{equation}\label{hypercommutative_relation}
    m_3\circ_1m_2+(m_2\circ_1m_3)\cdot (3\; 4)-m_3\circ_2m_2-m_2\circ_2m_3
\end{equation}
corresponding to \eqref{hypercommutative_key_relation}.

A common way of constructing DG-hypercommutative algebras is from DG-Batalin-Vilkovisky algebras equipped with a trivialization of the Batalin-Vilkovisky operator. We now describe this approach following \cite[\S1]{Khoroshkin2013}.

\begin{definition}
    A \emph{Batalin-Vilkovisky algebra} $A$ is a Gerstenhaber algebra equipped with a degree $1$ morphism, the \emph{Batalin-Vilkovisky operator},
    \[\Delta\colon A\To A,\]
    such that $\Delta^2=0$ and
    \[[a,b]=\Delta(a\cdot b)-\Delta(a)\cdot b-(-1)^{\abs{a}}a\cdot\Delta(b).\]
\end{definition}

If $M$ is a Poisson manifold, then $\Omega(M)$ is in fact a Batalin-Vilkovisky algebra with $\Delta=[d,i_w]$. Here $i_w\colon \Omega(M)\to\Omega(M)$ is the interior product with a Poisson bivector $w$, which is a degree $-2$ and order $\leq 2$ differential operator of the DG-commutative algebra $\Omega(M)$, see \cite{koszul_1985_crochet_schoutennijenhuis_cohomologie}.



If $A$ is a DG-Batalin-Vilkovisky algebra with differential $d$ and $t$ is a formal parameter of degree $-2$, $A\serie{t}$ carries two chain complex structures with differentials $d$ and $d+\Delta t$, respectively, with $t$ a cycle.

\begin{definition}
    A \emph{trivialization} of a DG-Batalin-Vilkovisky algebra is a chain isomorphism
    \[\Phi(z)\colon (A\serie{t},d+\Delta t)\cong (A\serie{t},d).\]
\end{definition}

The operad $\O{H}$ for hypercommutative algebras is the homology of the DG-operad $\O{BV}/\Delta$ encoding DG-Batalin-Vilkovisky algebras equipped with a trivialization. The quotient $\O{BV}/\Delta$ is just the notation used by \cite{Khoroshkin2013}. The latter operad is actually formal. An explicit quasi-isomorphism from the former to the latter is constructed in \cite{Khoroshkin2013}, providing a direct way of endowing a trivialized DG-Batalin-Vilkovisky algebra with a DG-hypercommutative algebra structure.

\begin{remark}\label{colleagues}
    A trivialization of $\Omega(M)$ for $M$ a Poisson manifold is
    \[e^{i_wt}\colon(\Omega(M)\serie{t},d+\Delta t)\cong (\Omega(M)\serie{t},d),\]
    see \cite[Theorem 3.7]{dotsenko_shadrin_vallette_2015_rham_cohomology_homotopy}. The arity $3$ operation of the induced DG-hypercommutative algebra structure on $\Omega(M)$ vanishes. The formula for this operation is spelled out in \cite[Example 1.2]{Khoroshkin2013} (second formula). Therein, the binary product is called $m$, the arity $3$ product is $\theta_3$, and $i_w=\phi_1$. The vanishing of this formula is equivalent to $i_w$ being a differential operator of order $\leq 2$. We do not know of any computation showing that, for a certain Poisson manifold $M$, the DG-hypercommutative algebra structure on $\Omega(M)$ does not reduce to the usual DG-commutative algebra structure with trivial products of arities $\geq 3$.
\end{remark}

We endow the Chevalley--Eilenberg complex $C^*(\kk^2\oplus\L{h},\kk)$ of the direct sum of a $2$-dimensional abelian Lie algebra and the Heisenberg Lie algebra with the trivial Batalin-Vilkovisky operator $\Delta=0$, and hence the trivial Gerstenhaber Lie bracket. Despite the Batalin-Vilkovisky operator is already trivial, we can take a non-trivial trivialization in order to endow $A$ with an interesting DG-hypercommutative algebra structure.

In $C^*(\kk^2\oplus\L{h},\kk)$, we consider the basis given by the products of the elements of the bases $\{v,w\}\subset(\kk^2)^*$ and $\{x,y,h\}\subset\L{h}^*$ in lexicographic order. The product of two elements of this basis is either another element in the basis, up to sign, or zero. We consider the degree $-2$  endomorphism $i\colon C^*(\kk^2\oplus\L{h},\kk)\to C^*(\kk^2\oplus\L{h},\kk)$ defined on the basis as
\[i(v w x)= y\]
and zero otherwise. It is tedious but straightforward to check that $i$ is a chain map. Hence,
\[e^{it}\colon (C^*(\kk^2\oplus\L{h},\kk)\serie{t},d)\cong (C^*(\kk^2\oplus\L{h},\kk)\serie{t},d)\]
is a trivialization by \cite[\S1.1]{Khoroshkin2013}. We endow $C^*(\kk^2\oplus\L{h},\kk)$ with the hypercommutative algebra structure induced by this trivialization. Below we use that $H^*(\kk^2\oplus\L{h},\kk)=(\bigwedge(\kk^2)^*)\otimes H^*(\L{h},\kk)$.

\begin{proposition}\label{hypercommutative}
    Let $\Gamma$ be the relation in \eqref{hypercommutative_relation}. Consider the DG-hypercommutative algebra structure on $C^*(\kk^2\oplus\L{h},\kk)$ we have just defined. The following Massey product is well defined and non-trivial, moreover
    \[\langle \bar v\bar w,\bar  v \bar x, \bar x, \bar x\rangle_\Gamma\cap (\kk^2\otimes H^2(\L{h},\R)) =\{\bar v \overline{x z}\}.\]
\end{proposition}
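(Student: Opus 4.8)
The plan is a direct computation from Definition~\ref{massey_product}. I would write the four Sweedler summands of $\Gamma$ in \eqref{hypercommutative_relation} as $m_3\circ_1 m_2$, $(m_2\circ_1 m_3)\cdot(3\,4)$, $-m_3\circ_2 m_2$ and $-m_2\circ_2 m_3$, and set $x_1=\bar v\bar w$, $x_2=\bar v\bar x$, $x_3=x_4=\bar x$, with cocycle representatives $y_1=vw$, $y_2=vx$, $y_3=y_4=x$. For the hypercommutative structure on $A=C^*(\kk^2\oplus\L{h},\kk)$ attached to $e^{it}$ the binary operation $m_2$ is the original product; moreover $i^2=0$ since $i(y)=0$, so $e^{it}=1+it$ and, by Remark~\ref{colleagues}, the ternary operation $m_3$ is up to sign the second-order deviation of $i$ with respect to $m_2$, namely an alternating sum of terms each applying $i$ to the product of a sub-collection of the arguments and multiplying the result by the product of the remaining ones. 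As $i$ is supported on the single basis vector $vwx\mapsto y$, only the terms whose internal multiplicands combine to $vwx$ survive in any evaluation of $m_3$ on products of basis vectors, which keeps all the computations short; I would also use that $m_3$ is graded symmetric.

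First I would check the vanishing conditions \eqref{vanishing}. Three hold on the nose: $m_2(y_1,y_2)=vw\cdot vx=0$ and $m_2(y_2,y_3)=vx\cdot x=0$ because $v^2=x^2=0$, and $m_3(y_2,y_3,y_4)=m_3(vx,x,x)=0$ by graded symmetry with two equal odd-degree arguments (characteristic zero); for these summands take $\rho^{(2)}=0$. For the fourth summand, the only surviving term of $m_3(vw,vx,x)$ is the one containing $i(vw\cdot x)=i(vwx)=y$, so $m_3(vw,vx,x)=\pm\,vx\cdot y=\pm\,vxy=\mp\, d(vz)$, where $d(z)=xy$ as in the proof of Proposition~\ref{lie}. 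Thus this class vanishes in cohomology, the Massey product is defined, and one takes $\rho^{(2)}=\mp\, vz$. Substituting into \eqref{representative}, the three summands with $\rho^{(2)}=0$ drop out and only $\pm\, m_2(vz,x)=\pm\,vxz$ remains; this is the expected cocycle, and its class is $\pm\,\bar v\overline{xz}\in\bigwedge^1(\kk^2)^*\otimes H^2(\L{h},\kk)\subset H^3(\kk^2\oplus\L{h},\kk)$, which is nonzero because $xz$ is not a coboundary in $C^*(\L{h},\kk)$. A careful chase of the Koszul signs in \eqref{representative} and in the $m_3$-formula fixes the sign as $+$.

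Next I would control the indeterminacy, i.e.~the denominator of \eqref{quotient}, showing it meets $\kk^2\otimes H^2(\L{h},\kk)=\bigwedge^1(\kk^2)^*\otimes H^2(\L{h},\kk)$ only in $0$. The two summands of the form $m_3(H^*(A),\bar x,\bar x)$ vanish by graded symmetry of $m_3$. For the remaining three, grade $H^*(A)=\bigwedge(\kk^2)^*\otimes H^*(\L{h},\kk)$ by exterior degree in $(\kk^2)^*$. The summand $m_2(\bar v\bar w,H^1(A))=\bar v\bar w\cdot H^1(A)$ lies in $\bigwedge^2(\kk^2)^*\otimes H^1(\L{h},\kk)$ because $\bigwedge^3(\kk^2)^*=0$. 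The summand $m_2(H^2(A),\bar x)=H^2(A)\cdot\bar x$ lies in $\bigwedge^2(\kk^2)^*\otimes H^1(\L{h},\kk)\oplus H^3(\L{h},\kk)$: the only potential contribution in $\bigwedge^1(\kk^2)^*\otimes H^2(\L{h},\kk)$ comes from multiplying the $\bigwedge^1(\kk^2)^*\otimes H^1(\L{h},\kk)$-part of $H^2(A)$ by $\bar x$ and vanishes because the cup product $H^1(\L{h},\kk)\otimes H^1(\L{h},\kk)\to H^2(\L{h},\kk)$ is trivial, while the part in $H^3(\L{h},\kk)$ is spanned by $\bar x\cdot\overline{yz}=\overline{xyz}$. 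Lastly, for the summand $m_3(\bar v\bar w,H^2(A),\bar x)$ the deviation formula together with $i$ being supported on $vwx$ leaves only terms proportional to $vw\cdot y$ or to $\zeta\cdot\bar y$ with $\zeta\in H^2(A)$, which by the previous case again lie in $\bigwedge^2(\kk^2)^*\otimes H^1(\L{h},\kk)\oplus H^3(\L{h},\kk)$. Hence the whole indeterminacy is contained in $\bigwedge^2(\kk^2)^*\otimes H^1(\L{h},\kk)\oplus H^3(\L{h},\kk)$, a complement of $\bigwedge^1(\kk^2)^*\otimes H^2(\L{h},\kk)$ in $H^3(\kk^2\oplus\L{h},\kk)$ (again since $\bigwedge^3(\kk^2)^*=0$), so the Massey product coset meets $\kk^2\otimes H^2(\L{h},\kk)$ exactly in the nonzero element $\bar v\overline{xz}$.

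The main obstacle will be bookkeeping rather than anything conceptual: tracking the Koszul signs in the $m_3$-formula (whose source \cite{Khoroshkin2013} carries the sign misprints flagged in Remark~\ref{colleagues}) and in \eqref{representative}, and making sure that none of the several pieces of the indeterminacy escapes the complementary summand $\bigwedge^2(\kk^2)^*\otimes H^1(\L{h},\kk)\oplus H^3(\L{h},\kk)$. All of it becomes elementary once one exploits that $i$ annihilates every basis monomial except $vwx$, which forces $m_3$ on monomials to be supported on a very small set.
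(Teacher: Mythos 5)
Your proposal is correct and follows essentially the same route as the paper: the same cochain representatives, the same observation that three of the four vanishing conditions hold on the nose (so their $\rho^{(2)}=0$), the same key computation $(vw,vx,x)=-vx\,i(vwx)=d(vz)$ for the remaining summand, and the same resulting representative $vxz$. The only real difference is that you dispose of the indeterminacy by showing it lies in the complementary summand $\bigwedge^2(\kk^2)^*\otimes H^1(\L{h},\kk)\oplus H^3(\L{h},\kk)$ via the exterior-degree grading (a slightly slicker packaging), whereas the paper enumerates explicit bases, in particular computing all seven products $(vw,\zeta,x)$; your deferred sign check and the harmless miscount of ``two'' summands of the form $(\,\cdot\,,\bar x,\bar x)$ (there is only one) are just the bookkeeping the paper carries out explicitly.
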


\begin{proof}
    Apart from the commutative binary product on $C^*(\kk^2\oplus\L{h},\kk)$, we only need to know the arity $3$ product, which has degree $-2$ since $C^*(\kk^2\oplus\L{h},\kk)$ is a cochain complex. This arity $3$ product can be computed from the binary product and the chain map $i$ by means of the second formula in \cite[Example 1.2]{Khoroshkin2013}, already mentioned in Remark \ref{colleagues}.

    The Massey product is defined since
    \begin{align*}
        v w v x & =- v^2 w x=0, &
        v x x   & =v x^2=0.
    \end{align*}
    Moreover,
    \[(v x,x,x)=0\]
    by hypercommutativity, since we have two repeated entries of odd degree. We would like to stress that the three previous vanishing equations hold at the level of cochains. This will simplify the computation of the Massey product.
    Furthermore, since $i$ vanishes in dimensions $\neq 3$,
    \begin{align*}
        (v w, v x,  x) & = - v x i(v w x) \\
                       & = - v x i(v w x) \\
                       & = - v x y        \\
                       & = d(v z).
    \end{align*}

    By the previous computations, the Massey product is represented by
    \[-vzx=vxz,\]
    which is a cycle whose cohomology class belongs to the direct summand  $\kk^2\otimes H^2(\L{h},\R)\subset H^3(\kk^2\oplus\L{h},\R)$.

    The indeterminacy is the following sum,
    \[(H^3(\kk^2\oplus\L{h},\kk),\bar x,\bar x)+H^2(\kk^2\oplus\L{h},\kk)\bar{x}
        +\bar v\bar wH^1(\kk^2\oplus\L{h},\kk)+(\bar v\bar w,H^2(\kk^2\oplus\L{h},\kk),\bar x).\]
    The first summand vanishes by hypercommutativity. The second and third summands decompose as
    \begin{align*}
        H^2(\kk^2\oplus\L{h},\kk)\cdot \bar{x}      & = H^2(\L{h},\kk)\cdot\bar x\oplus \kk^2\otimes (H^1(\L{h},\kk)\cdot\bar x)\oplus \wedge^2\kk\otimes\kk\cdot \bar x \\
                                                    & = \kk\cdot \{\bar x \overline{yz},\bar v\bar w\bar x\},                                                            \\
        \bar v\bar w\cdot H^1(\kk^2\oplus\L{h},\kk) & = \kk\cdot \{\bar v\bar w\bar x,\bar v\bar w\bar y\}.
    \end{align*}
    We now compute the fourth summand. A basis of $H^2(\kk^2\oplus\L{h},\kk)$ is
    \[\{\bar v\bar w, \bar v\bar x, \bar v\bar y, \bar w\bar x, \bar w\bar y, \overline{xz}, \overline{yz}\}.\]
    The possible arity $3$ products are
    \begin{align*}
        (vw,vw,x) & = 0,                     \\
        (vw,vx,x) & = d(vz),                 \\
        (vw,vy,x) & = -vyi(vwx)=-vy^2=0,     \\
        (vw,wx,x) & = -wxi(vwx)=-wxy=d(wz),  \\
        (vw,wy,x) & = -wyi(vwx)=-wy^2=0,     \\
        (vw,xz,x) & = -xzi(vwx)=-xzy=xyz,    \\
        (vw,yz,x) & = -yzi(vwx)=-yzy=zy^2=0.
    \end{align*}
    Here we use the same arguments as above. Exactly one of these arity $3$ products is non-trivial in cohomology. Hence the fourth summand has basis
    \[\{\bar{x}\overline{yz}\},\]
    and a basis of the whole indeterminacy is
    \[\{\bar v\bar w\bar x,\bar v\bar w\bar y,\bar{x}\overline{yz}\}.\]
    Therefore the intersection of the indeterminacy with  $\kk^2\otimes H^2(\L{h},\R)$ in $H^3(\kk^2\oplus\L{h},\kk)$ is trivial.
\end{proof}

The existence of a non-trivial Massey product associated to the relation \eqref{hypercommutative_relation} shows that the DG-hypercommutative algebra structure on $C^*(\kk^2\oplus\L{h},\kk)$ is not quasi-isomorphic to that of $\Omega(S^1\times S^1\times M)$ induced by a Poisson structure on $S^1\times S^1\times M$ ($M$ the Heisenberg manifold), since the latter would have trivial arity $3$ product, see Remark \ref{colleagues}. This also follows from the fact that the hypercommutative algebra $H^*(\kk^2\oplus\L{h},\kk)$ has a non-trivial arity $3$ product, namely,
\[(\bar v\bar w,\overline{xz},\bar x)=\bar x\overline{yz},\]
computed in the previous proof.

\providecommand{\bysame}{\leavevmode\hbox to3em{\hrulefill}\thinspace}
\providecommand{\MR}{\relax\ifhmode\unskip\space\fi MR }
\providecommand{\MRhref}[2]{%
  \href{http://www.ams.org/mathscinet-getitem?mr=#1}{#2}
}
\providecommand{\href}[2]{#2}

\end{document}